\definecolor{darkred}{rgb}{0.5,0,0}
\definecolor{darkgreen}{rgb}{0,0.5,0}
\definecolor{darkblue}{rgb}{0,0,0.5}
\newtheorem{theorem}{Theorem}
\newtheorem{lemma}[theorem]{Lemma}
\newtheorem{corollary}[theorem]{Corollary}
\newtheorem{definition}[theorem]{Definition}
\DeclareMathOperator{\cp}{cp}
\newcommand{\eref}[1]{Equation~(\ref{#1})}
\newcommand{\tref}[1]{Theorem~\ref{#1}}
\newcommand{\lref}[1]{Lemma~\ref{#1}} 
\newcommand{\cref}[1]{Corollary~\ref{#1}}
\newcommand{\dref}[1]{Definition~\ref{#1}} 
\newcommand{\fref}[1]{Figure~\ref{#1}}
\begin{document}%

\title{Counting Defective Parking Functions}%

\author{Peter J Cameron, Daniel Johannsen, \\ Thomas Prellberg, Pascal Schweitzer\\
\small School of Mathematical Sciences\\[-0.8ex]
\small Queen Mary, University of London\\[-0.8ex]
\small Mile End Road, London E1 4NS, U.K.\\[-0.8ex]
\small \texttt{p.j.cameron@qmul.ac.uk} \\[-0.8ex]
\small \texttt{tp@maths.qmul.ac.uk} \\
\small Max-Planck-Institut f\"ur Informatik\\[-0.8ex]
\small CAMPUS E1 4, 66123 Saarbrücken\\[-0.8ex]
\small \texttt{johannse@mpi-inf.mpg.de} \\[-0.8ex]
\small \texttt{pascal@mpi-inf.mpg.de}  
}%

\date{\dateline{Mar 3, 2008}{not yet}\\
\small Mathematics Subject Classifications: 05A15, 05A16}
\maketitle

\begin{abstract}%
Suppose that $m$ drivers each choose a preferred parking space in a linear car park with $n$ spaces. Each driver goes to the chosen space and parks there if it is free, and otherwise takes the first available space with a larger number (if any). If all drivers park successfully, the sequence of choices is called a parking function. In general, if $k$ drivers fail to park, we have a \emph{defective parking function} of \emph{defect} $k$. Let $\cp(n,m,k)$ be the number of such functions.

In this paper, we establish a recurrence relation for the numbers $\cp(n,m,k)$, and express this as an equation for a three-variable generating function. We solve this equation using the kernel method, and extract the coefficients explicitly: it turns out that the cumulative totals are partial sums in Abel's binomial identity. Finally, we compute the asymptotics of $\cp(n,m,k)$. In particular, for the case $m=n$, if choices are made  independently at random, the limiting distribution of the defect (the number of drivers who fail to park), scaled by the square root of $n$, is the Rayleigh distribution. On the other hand, in the case $m=\omega(n)$, the probability that all spaces are occupied tends asymptotically to one.
\end{abstract}%

\section{Introduction}%

A car park consists of $n$ numbered spaces in a line. The drivers of $m$ cars have independently chosen their favorite parking spaces. Each driver arrives at the car park and proceeds to his chosen space, parking there if it is free. If the chosen space is occupied, the driver continues on towards the larger-numbered spaces and takes the first available space if any; if no  such space is available, the driver leaves the car park and goes home. What is the probability that everybody parks successfully? Equivalently, how many of the $n^m$ sequences of choices by the drivers lead to everyone parking? (Such a sequence is called a \emph{parking function}.)

This problem was first raised in the 1960s in connection with hashing \cite{konheim_weiss}. In the case of $m=n$, a short and elegant proof of the formula $(n+1)^{n-1}$ was given by Pollak~\cite{foata_riordan}. From these beginnings, a substantial theory of parking functions has been developed, with links to trees (as one would expect from the formula above) and priority queues \cite{gilbey_kalikow},  partitions~\cite{stanley}, and representation theory~\cite{haiman}. More recently, generalizations of parking functions have found application in areas like the modelling of percolation \cite{majumdar_dean}, the Abelian sandpile model \cite{sandpile04} and branching processes~\cite{DumitriuSpencerYan03}.

In this paper,  we are concerned with the probability that $k$ drivers  fail to park successfully. We call the corresponding assignments a \emph{defective parking function} of \emph{defect}~$k$. Suppose that $m$ cars attempt to park in a linear car park with $n$ spaces according to the above rules; let $\cp(n,m,k)$ be the number of choices which result in exactly $k$ drivers failing to park. 

The concept is related to that of $x$--parking functions introduced by Stanley and Pitman: For a tuple of integers $x = (x_1, \ldots, x_n)$ with $n\in\mathbb N$, an $x$--parking function is a sequence $(a_1,\ldots,a_n)$ whose ordered permutation $(b_1,\ldots,b_n)$ satisfies $b_i \leq x_1+\ldots+x_i$ for all $1\le i\le n$.

These generalized parking functions have been extensively studied. Stanley and Pitman related their number to the volume polynomials of certain types of polytopes and to that of plane partitions \cite{pitmanstanley98}. Later, Yan and Kung  investigated moments of sums of their numbers by using Gon{\v c}arov polynomials~\cite{kungyan2003a,kungyan2003b}.

Classical parking functions correspond to $(1,1,\hdots,1)$--parking functions while defective parking functions of defect $k$ correspond to $(n-(m-k)+1,1,\ldots,1,0,\ldots,0)$--parking functions which in addition are not $(n-(m-k),1,\ldots,1,0,\ldots,0)$--parking functions.  For tuples $x=(a,b,\hdots,b,c,0,\hdots,0)$, an explicit formula was derived in \cite{pitmanstanley98} using the connection to polytope volumes and was reproven by Yan with combinatorial means in \cite{yan00}.

We establish a new recurrence relation for the number of defective parking functions, allowing us to formulate an equation defining the corresponding three-variable generating function. Applying the kernel method \cite{Flajolet,Prodinger}, we solve this equation explicitly, and then extract the coefficients. We reobtain the fact that the cumulative totals turn out to be partial sums in \emph{Abel's binomial formula} \cite{abelbinom} as shown in~\cite{pitmanstanley98} and \cite{yan00} within the context of $x$--parking functions. In fact, the parking function approach may be used to prove special cases of this identity.

We then investigate the asymptotical behavior of defective parking for prominent cases. Spencer and Yan~\cite{SpencerYan} have studied asymptotics of parking functions with a defect equal to the difference between the number of cars and the number of spaces (the case in which all parking spaces end up being taken). We extend these results to the case of arbitrary defects. In particular, we include the case in which the number of cars is less than the number of spaces.

First, we show that, for fixed $k$ and $\ell$, the limit of $\cp(n,n+\ell,k)/\cp(n,n+\ell,0)$ exists, and compute its value. For example, the limiting value of $\cp(n,n,1)/\cp(n,n,0)$ is $2\mathrm{e}-3$. 

To survey the limiting shape of the distribution, we need appropriate  scaling, which turns out to be by the square root of $n$. We show that, if  $m=n+\lfloor y\sqrt{n}\rfloor$, then the limiting probability of at most  $\lfloor x\sqrt{n}\rfloor$ drivers failing to park is
\begin{equation*}%
\lim_{n\to\infty}\frac{1}{n^m}\cdot\sum_{k=0}^{\lfloor x\sqrt{n}\rfloor} 
\cp(n,m,k)
= \begin{cases}%
1-e^{-2x(x-y)} &  \text{if } x>y,\\
0 & \text{otherwise};\\
\end{cases}%
\end{equation*}%
a surprisingly simple result, given the complicated form of the exact formula.

For $y=0$ (that is, in the case of $m=n$), this limiting distribution is the  \emph{Rayleigh distribution} with parameter $1/2$. (This occurs as the  distribution of the length of a random vector in the plane whose coordinates are independent normal variables with standard deviation $1/2$. We do not know of a direct connection of this with our problem.)

We also investigate the limiting probability that all parking spaces are occupied. Obviously, for $m$ strictly smaller than $n$ this probability is zero. We show that for $m=\lfloor\lambda n\rfloor$ with fixed $\lambda\in\mathbb R^+$ and $k=m-n$,
\begin{equation*}%
\lim_{n\to\infty}\frac{\cp(n,m,k)}{n^m}=
\begin{cases}%
0&\text{if } \lambda\leq 1,\\
1-e^{-{\lambda}}\cdot\sum_{i\ge 1}
\frac{(\lambda i/ e^{\lambda})^{i-1}}{i!}
&\text{if } \lambda>1.
\end{cases}%
\end{equation*}%

An alternative interpretation of the above car park problem involves a variation on the coupon collector problem. In the original problem, there are $n$ distinct items. If a collector acquires random items, she will have approximately $n/e$ duplicates after collecting the first $n$ items and will need to collect about $n\cdot\log n$ items before she has a complete set. But suppose the items are of strictly decreasing value and she has the option of trading duplicate items; each item may be traded for any other one of lower value.  Ideally, she trades duplicates against the next most valuable item she does not yet possess.   Then, we show that she will receive only about $\sqrt{n}$ duplicates among the first $n$ items and a complete collection already with $n\cdot f(n)$ items for any function $f\colon\mathbb N\to\mathbb N$ for which $\lim_{n\to\infty}f(n)=\infty$ holds.

We conclude this introduction with Pollak's lovely proof, adapted to the general case: for $m$ cars and $n$ spaces (with $m\le n$), the number of ways in which every driver parks successfully is $(n+1-m)(n+1)^{m-1}$. To see this, consider a circular car park with $n+1$ spaces, for which the same rules apply.  Now everyone will park successfully and there will be $n+1-m$ empty spaces; such a choice will be a parking function (for the original problem), if and only if space number $n+1$ is empty. By symmetry, this will happen in a fraction $(n+1-m)/(n+1)$ of the total number $(n+1)^m$ of choices. We will see later that our argument reproduces this result as an essential step in the working-out of the kernel method.

\section{A Functional Equation}%
\label{sec:recursion}%

Let $\cp(n,m,k)$ be the number of assignments of $m$ drivers to a car park with $n$ spaces, that result in exactly $k$ drivers leaving in the end, where the parking strategy of the drivers is as described in the introduction above. There are $n^m$ such assignments.  Then $\cp(n,m,k)$ can be concisely expressed as the number of functions $f\colon \{1,\hdots, m\} \rightarrow \{1,\hdots,n\}$ for which for all $i\in \{1,\hdots,n\}$ the set $f^{-1}(\{n+1-i,\hdots,n\})$ has a size of at most $k+i$, and at least one of these sets has a size of exactly $k+i$. 

Values of $\cp(n,n,k)$, the case in which the number of drivers and the number of spaces coincide, can be found in Table~\ref{table_of_values}.

\begin{table}%
\thinlines%
\begin{scriptsize}%
\begin{center}%
\thinlines%
  \begin{tabular}{ r | r  r  r r r r r r r r}%
$n$ & $k =0$& 1 & 2 & 3 & 4 & 5 & 6 & 7 & 8 & 9\\
    \hline
    1&   1 \\
    2&   3 &  1 &   \\
    3&   16 & 10 &  1  & \\
    4&   125& 107 & 23 & 1 &   \\
    5&   1296 & 1346 & 436 & 46 &  1 \\
    6&   16807  &  19917 & 8402 & 1442& 87 & 1\\
    7&   262144 &  341986 & 173860  & 41070 & 4320 & 162 & 1\\
    8&   4782969 & 6713975 & 3924685 & 1166083 & 176843 & 12357 & 303& 1\\
    9&   100000000 &  148717762 &  96920092 &   34268902 &   6768184 &   710314 &  34660 &  574&    1\\
   10&   2357947691& 3674435393& 2612981360& 1059688652& 256059854& 36046214& 2743112&  96620& 1103& 1\\
  \end{tabular}%
\end{center}%
\end{scriptsize}%
\caption{\label{table_of_values}The table shows the numbers $\cp(n,n,k)$ for $n=1,\hdots,10$ and $k=0,\hdots,9$ which count all car parking assignments of $n$ cars to $n$ spaces, such that $k$ cars are not parked.}%
\end{table}%

We will now derive a recursion formula by transforming the parameters so that they are more suitable for our purpose. Let $r$ be the number of spaces that end up unoccupied, and let $s$ be the number of occupied spaces.

\begin{definition}%
For $r,s,k\in\mathbb N_0$ let $a(r,s,k)$ denote the number of choices for which $r$ spaces remain unoccupied, $s$ spaces are occupied in the end, and $k$ people drive home.
\end{definition}%
This obviously means that there are $n = r+s$ spaces in total, and that $m = k+s$ drivers arrive.  Observe that $\cp(n,m,k)=a(n-m+k,m-k,k)$ is the number of assignments for the car parking problem with~$n$ parking spaces,~$m$ visitors, and~$k$ drivers going home. Correspondingly, $\cp(n,n,k)=a(k,n-k,k)$. Thus, finding a solution for $a(r,s,k)$ will yield a solution for the original problem. We extend this definition to all integers by setting $a(r,s,k)=0$ whenever $r$, $s$, or $k$ is smaller than 0. For the newly introduced numbers, we get the following recursive formula:
\begin{lemma}%
\label{thm:recursion}%
For $r,s,k\in\mathbb N_0$, the number of assignments of $s+k$ drivers to $r+s$ spaces, such that $r$ spaces remain empty, $s$ spaces are occupied, and $k$ drivers leave, is recursively defined by
\begin{equation*}%
a(r,s,k)=
\begin{cases}%
\hfill 1 &\text{if } r=s=k=0, \\
a(r-1,s,0) + \sum_{i=0}^{k+1} \binom{s+k}{k+1-i}\cdot a(r,s-1,i)
&\text{if }k=0\text{ and }(r>0\text{ or }s>0),\\
\hfill \sum_{i=0}^{k+1} \binom{s+k}{k+1-i}\cdot a(r,s-1,i) & \text{if }k>0.\\
\end{cases}%
\end{equation*}%
\end{lemma}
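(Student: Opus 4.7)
The plan is to verify the recurrence combinatorially by decomposing each counted choice function $f \colon \{1, \dots, s+k\} \to \{1, \dots, n\}$ (with $n := r+s$) according to the set $A := f^{-1}(n)$ of drivers whose preferred space is the last one. Setting $j := |A|$, the choice of $A$ contributes a factor of $\binom{s+k}{j}$; the remaining $s + k - j$ drivers form a restricted choice function $f'$ on the reduced car park with spaces $\{1, \dots, n-1\}$, which the recurrence will see via $a(r, s-1, \cdot)$.

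The key step is to track the overflow. Running the left-to-right parking process, let $V_\ell$ denote the number of drivers who have not yet parked after spaces $1, \dots, \ell$ have been processed; since the inputs to spaces $1, \dots, n-1$ are the same for $f$ and $f'$, the overflow $V_{n-1}$ in the original problem equals the number $k'$ of home-goers of $f'$ in the reduced problem. Feeding in the $j$ space-$n$ choosers and then closing out the process at space $n$ yields the identity $V_n = \max(V_{n-1} + j - 1, 0)$, so space $n$ is occupied in the original iff $V_{n-1} + j \geq 1$, and the original number of home-goers is $k = V_n$.

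From this, the recurrence follows by case analysis. For $k \geq 1$, one must have $V_{n-1} + j \geq 1$ (otherwise $V_n$ would be $0$), giving $k' = V_{n-1} = k + 1 - j$; the reduced problem then has $s' = s - 1$ occupied spaces and $r' = r$ empty spaces, and summing over the admissible range $j = 0, 1, \dots, k+1$ (where $k' \geq 0$ constrains $j$) yields $a(r, s, k) = \sum_{j=0}^{k+1} \binom{s+k}{j} a(r, s-1, k+1-j)$, which matches the stated formula after the reindexing $i := k+1-j$. For $k = 0$, the same analysis handles the subcases with $V_{n-1} + j \geq 1$ (producing the sum $\sum_{j=0}^{1} \binom{s}{j} a(r, s-1, 1-j)$), while the residual subcase $V_{n-1} = j = 0$ forces space $n$ to be empty in the original and $f'$ to have outcome $(r-1, s, 0)$, contributing the extra $a(r-1, s, 0)$ term; the base case $r=s=k=0$ corresponds to the empty assignment. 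The main obstacle, I expect, will be establishing the identity $V_n = \max(V_{n-1}+j-1, 0)$ together with the order-free characterisation of $V_{n-1}$, so that it legitimately matches the reduced home-goer count $k'$ regardless of the arrival order of the non-$n$-choosers; once this identity is in hand, the bookkeeping above delivers the recurrence cleanly.
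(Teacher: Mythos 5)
Your proposal is correct and takes essentially the same route as the paper's proof: both decompose an assignment according to how many drivers choose the last space (equivalently, the overflow $i$ from the first $r+s-1$ spaces), yielding the factor $\binom{s+k}{k+1-i}$ and the recursion to $a(r,s-1,i)$, with the empty-last-space subcase contributing the extra $a(r-1,s,0)$ term when $k=0$. Your overflow variable $V_\ell$ is simply a more explicit formalisation of the paper's observation that exactly $k+1$ drivers arrive at the last space when $k$ drivers leave and that their count splits into choosers and overflowers independently of arrival order.
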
%
\begin{proof}%
If~$r=s=k=0$, there exists exactly one assignment. Next, let~$k>0$, as in the third case. Since $k>0$ drivers leave in the end, there are at least~$k+1$ drivers that arrive at parking space number~$r+s$, counting the ones that actually chose it, as well as the ones that did not. An assignment of the~$s+k$ drivers to the~$r+s$ parking spaces satisfies this condition, if and only if for some~$i\in\{1,\hdots,k+1\}$ there are~$k+1-i$ drivers that actually choose the last space and~$i$ drivers that arrive at the space, although they have not chosen it. For different values of $i$, the corresponding assignments of the drivers must differ.

There are~$\binom{r+k}{k+1-i}$ ways to choose the~$k+1-i$ drivers that actually pick the last space and $a(r,s-1,i)$ assignments of the remaining~$s-1+i$ drivers to the first~$r+s-1$ spaces, such that exactly~$i$ of them will arrive at the last space. Since these assignments are independent of the choice of $k+1-i$ drivers that choose the last space, the claimed recursion holds.

Finally, let $k=0$ but $r>0$ or $s>0$. If some driver arrives at the last parking space, whether actually choosing it or just taking it due to lack of other spaces, the same recursion as for $k>0$ holds. Otherwise, the last space will be empty and the number of assignments in which this happens is equal to the number of ways all $s$ drivers can be assigned to the first~$r+s-1$ spaces, such that no driver has to leave. For this, there are exactly $a(r-1,s,0)$ ways.
\end{proof}%
We restate the recursion formula from the previous lemma as
\begin{equation*}%
a(r,s,k)=\textbf{1}_{\{r=s=k=0\}}(r,s,k) + \textbf{1}_{\{k=0\}}(k)\cdot a(r-1,s,0)
+ \sum_{i=0}^{k+1} \binom{s+k}{k+1-i}\cdot a(r,s-1,i)\,,
\end{equation*}%
where $\textbf{1}_{\{k=0\}}(k)$ and $\textbf{1}_{\{r=s=k=0\}}(r,s,k)$ are the characteristic functions which are one, if $k=0$ respectively $r=s=k=0$ and zero, otherwise. Dividing both sides by $\binom{s+k}{k}$ yields
\begin{equation}%
\label{eq:a_recursion}
\frac{a(r,s,k)}{\tbinom{s+k}{k}}=\textbf{1}_{\{r=s=k=0\}}(r,s,k)
+ \textbf{1}_{\{k=0\}}(k)\cdot\frac{a(r-1,s,0)}{\tbinom{s+0}{0}}
+ \frac{s}{k+1}\cdot\sum_{i=0}^{k+1}
\binom{k+1}{i}\cdot\frac{a(r,s-1,i)}{\tbinom{s-1+i}{i}}\,.
\end{equation}%
The previous equation suggests to represent $a(r,s,k)$ by a generating function which is ordinary in $u$ and exponential in a combination of $v$ and $t$.
\begin{lemma}%
\label{lem:a_implicit}%
Let~$A$ be the formal power series in the three variables~$u$,~$w$, and~$t$ defined by
\begin{equation*}%
A(u,v,t):=\sum_{r,s,k\ge0}
a(r,s,k)\cdot u^r \tfrac{v^s t^k}{(s+k)!}\,.
\end{equation*}%
Then $A$ is the unique solution of
\begin{equation*}%
0=(\tfrac{v}{t}e^t-1)\cdot A(u,v,t)+(u-\tfrac{v}{t})\cdot A(u,v,0)+1
\end{equation*}%
in the ring of formal power series in $u$, $v$ and $t$.
\end{lemma}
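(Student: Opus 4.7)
The plan is to multiply the normalized recurrence~\eref{eq:a_recursion} by $\frac{u^r v^s t^k}{s!\,k!}$ and sum over all $r,s,k\ge 0$. This weighting is chosen precisely so that $\frac{a(r,s,k)}{\binom{s+k}{k}}\cdot\frac{u^r v^s t^k}{s!\,k!}=a(r,s,k)\frac{u^r v^s t^k}{(s+k)!}$ on the left, collapsing the LHS to $A(u,v,t)$. On the right-hand side the indicator $\mathbf{1}_{\{r=s=k=0\}}$ contributes the constant $1$, and $\mathbf{1}_{\{k=0\}}\cdot a(r-1,s,0)$, after shifting $r\mapsto r+1$, yields $u\cdot A(u,v,0)$, since setting $t=0$ in $A$ retains only the $k=0$ terms and replaces $1/(s+k)!$ by $1/s!$.

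The substantive calculation is in the convolution term. A short cancellation gives $\tfrac{s}{k+1}\binom{k+1}{i}/\binom{s-1+i}{i}\cdot\tfrac{1}{s!\,k!}=\tfrac{1}{(k+1-i)!\,(s-1+i)!}$, so this term contributes
\[
\sum_{r\ge 0,\,s\ge 1,\,k\ge 0}u^r v^s t^k\sum_{i=0}^{k+1}\frac{a(r,s-1,i)}{(k+1-i)!\,(s-1+i)!}.
\]
I would swap the order of summation to put $i$ outermost, separating $i=0$ from $i\ge 1$. The inner $k$-sum equals $\sum_{k\ge 0}t^k/(k+1)!=(e^t-1)/t$ when $i=0$, and, via the substitution $j=k+1-i$, equals $\sum_{j\ge 0}t^{j+i-1}/j!=t^{i-1}e^t$ when $i\ge 1$. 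These yield $\tfrac{v(e^t-1)}{t}A(u,v,0)$ and $\tfrac{v e^t}{t}\bigl(A(u,v,t)-A(u,v,0)\bigr)$ respectively. Recombining,
\[
A(u,v,t)=1+u\,A(u,v,0)+\tfrac{v e^t}{t}A(u,v,t)-\tfrac{v}{t}A(u,v,0),
\]
which rearranges to the asserted identity.

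For uniqueness, I would first multiply the functional equation by $t$ to clear the $1/t$ pole, obtaining $(v e^t - t)A(u,v,t)=v\,A(u,v,0)-(1+u\,A(u,v,0))\,t$ in $\mathbb{Q}[[u,v,t]]$. Since $v e^t - t$ has constant-in-$t$ term equal to $v$, extracting the coefficient of $t^n$ for $n\ge 1$ expresses $v\cdot[t^n]A$ as a polynomial in $[t^{<n}]A$ and $A(u,v,0)$; the requirement that $[t^n]A$ itself lie in $\mathbb{Q}[[u,v]]$ then forces iterated divisibility of $A(u,v,0)$ by powers of $v$, pinning down the entire series (equivalently, the kernel-method substitution $t=T(v)$ with $T=v e^{T}$ already determines $A(u,v,0)=T/(v-uT)$ uniquely). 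The main obstacle is the bookkeeping in the convolution: it is precisely the asymmetric split between $i=0$ and $i\ge 1$ that produces the two distinct coefficients $\tfrac{v(e^t-1)}{t}$ and $\tfrac{v e^t}{t}$, and hence the final coefficient $u-\tfrac{v}{t}$ of $A(u,v,0)$ in the functional equation.
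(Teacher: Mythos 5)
Your proposal is correct and follows essentially the same route as the paper: multiply the normalized recurrence~(\ref{eq:a_recursion}) by $u^r\tfrac{v^s}{s!}\tfrac{t^k}{k!}$, sum, and perform the index shifts and the split producing the factors $\tfrac{v}{t}(e^t-1)$ and $\tfrac{v}{t}e^t$, which is exactly the ``usual manipulation'' the paper invokes. Your additional uniqueness argument (clearing the $1/t$, extracting coefficients of $t^n$, and pinning $A(u,v,0)$ via the kernel substitution $t=T(v)$) is sound and merely makes explicit what the paper leaves implicit in asserting equivalence of the recursion and the functional equation.
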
%
\begin{proof}%
Multiplying both sides of \eref{eq:a_recursion} by $u^r \tfrac{v^s}{s!}\tfrac{t^k}{k!}$ and summing both sides over the parameters $r$, $s$, and $k$, followed by the usual manipulation, such as index-shifts and factorizing the product of $A(u,v,t)$ and $e^{t}$, shows that the definition of $A$ and the equation stated in this lemma are equivalent.
\end{proof}%

\section{An Explicit Formula}%
\label{sec:genfun}%

We will now proceed to find an explicit formula for the coefficients of the generating function. In general, equations like the equation in \lref{lem:a_implicit} cannot be solved directly, since both $A(u,v,t)$ and $A(u,v,0)$ are unknown. Instead, we resort to using the so-called kernel method~\cite{Flajolet, Prodinger}. Writing the equation in \lref{lem:a_implicit} as
\begin{equation*}%
K(v,t)\cdot A(u,v,t)=(u-\tfrac{v}{t})\cdot A(u,v,0)+1
\end{equation*}%
with the \emph{kernel} $K(v,t)=1-\tfrac{v}{t}e^t$, we solve  for $A(u,v,0)$ by setting the kernel equal to zero, which is here equivalent to finding a formal power series $t(v)$ for which $K(v,t(v))=0$. The solution to $t=ve^t$ is the well-known tree function $t=T(v)$ enumerating rooted trees on $i$ labelled nodes, which is standardly expressed in terms of the Lambert W-function~\cite{Corless} as $T(v)=-W(-v)$ and has series expansion
\begin{equation*}%
T(v)=\sum_{i=1}^\infty\frac{i^{i-1}}{i!}\cdot v^i\,.
\end{equation*}%

Therefore
\begin{equation}%
\label{eq:genfunctcomplete}%
A(u,v,0)=\frac{e^{T(v)}}{1-ue^{T(v)}}\,,
\end{equation}%
which can be substituted into the equation of \lref{lem:a_implicit} to derive an explicit expression for $A(u,v,t)$. This proves the following lemma.

\begin{lemma}%
\label{lem:genfunctdefective}%
The generating function for the car parking problem is given by
\begin{equation*}%
A(u,v,t)=\frac1{1-\frac vte^t}+\frac{u-\frac vt}{1-\frac vte^t}\cdot \frac{e^{T(v)}}{1-ue^{T(v)}}\,.
\end{equation*}%
\end{lemma}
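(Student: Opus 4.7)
The plan is to execute the kernel method exactly as sketched in the surrounding prose, then read off the formula. Starting from \lref{lem:a_implicit}, I rewrite the functional equation as
\begin{equation*}
K(v,t)\cdot A(u,v,t)=(u-\tfrac{v}{t})\cdot A(u,v,0)+1,\qquad K(v,t)=1-\tfrac{v}{t}e^t.
\end{equation*}
The idea of the kernel method is that, although this equation involves the two unknown series $A(u,v,t)$ and $A(u,v,0)$, one may choose a particular substitution $t=t(v)$ that makes the left-hand side vanish and thereby obtain a closed expression for $A(u,v,0)$.

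The required substitution is a formal power series $t(v)$ with $K(v,t(v))=0$, that is, $t=ve^{t}$. The tree function $T(v)=\sum_{i\ge 1}(i^{i-1}/i!)\,v^i$ is precisely the unique formal power series with no constant term solving this equation, so $t(v):=T(v)$ is a legitimate substitution in the ring of formal power series. Substituting $t=T(v)$ into the equation kills the left-hand side and, using the identity $v/T(v)=e^{-T(v)}$ (which is an immediate consequence of $T(v)=ve^{T(v)}$), yields
\begin{equation*}
0=(u-e^{-T(v)})\cdot A(u,v,0)+1.
\end{equation*}
Solving for $A(u,v,0)$ gives \eref{eq:genfunctcomplete}.

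Finally, I plug this closed form for $A(u,v,0)$ back into the original equation and divide by $K(v,t)$:
\begin{equation*}
A(u,v,t)=\frac{1}{1-\tfrac{v}{t}e^t}+\frac{u-\tfrac{v}{t}}{1-\tfrac{v}{t}e^t}\cdot\frac{e^{T(v)}}{1-ue^{T(v)}},
\end{equation*}
which is the stated formula. The step that requires the most care is the validity of the substitution $t\mapsto T(v)$: because $T(v)$ has no constant term, every formal power series in $t$ is substitutable, and the apparent $1/t$ factors in the equation are harmless once one checks that $v/T(v)=e^{-T(v)}$ is itself a well-defined formal power series in $v$. Uniqueness of $A$ is already granted by \lref{lem:a_implicit}, so no further verification is needed beyond exhibiting the solution.
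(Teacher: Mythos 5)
Your proposal is correct and follows essentially the same route as the paper: write the equation from \lref{lem:a_implicit} in kernel form, set the kernel to zero via the substitution $t=T(v)$ to obtain \eref{eq:genfunctcomplete}, and substitute back to get the closed form for $A(u,v,t)$. The extra remarks on the legitimacy of the formal substitution (no constant term in $T(v)$, well-definedness of $v/T(v)=e^{-T(v)}$) are a welcome but minor elaboration of what the paper leaves implicit.
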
%

Applying Lagrange inversion to \eref{eq:genfunctcomplete}, we obtain the explicit expression
\begin{equation}%
\label{eq:a_null}%
A(u,v,0)=\sum_{r,s\ge 0}(r+1)\cdot (r+s+1)^{s-1}\cdot u^r\tfrac{v^s}{s!}\,.
\end{equation}%

The coefficients of $A(u,v,0)$ have already been obtained in the introduction by a direct combinatorial method. It becomes apparent that we can express the car parking numbers $\cp(n,m,k)=a(n-m+k,m-k,k)$ in terms of the following sums, which also have a direct combinatorial interpretation.

\begin{definition}%
\label{def:s_sum}%
For $n,m,k\in\mathbb N_0$, let
\begin{equation*}%
S(n,m,k) :=%
\begin{cases}%
n^m & \text{if }k\leq m-n,\\
\displaystyle\sum_{i=0}^{m-k} \tbinom{m}{i}\cdot(n{-}m{+}k)\cdot(n{-}m{+}k{+}i)^{i-1}\cdot(m{-}k{-}i)^{m-i}
&\text{otherwise.}%
\end{cases}%
\end{equation*}%
\end{definition}%

The car parking numbers then calculate as follows.

\begin{theorem}%
\label{mainresult}
Let $n,m,k\in\mathbb N_0$. Then, the sum $S(n,m,k)$ counts the number of car parking assignments of $m$ cars on $n$ spaces, such that at least $k$ cars do not find a parking space, that is,
\begin{equation*}%
S(n,m,k)=\sum_{j=k}^m\cp(n,m,j)\,.
\end{equation*}%
Equivalently, the car parking numbers $\cp(n,m,k)$ are given by
\begin{equation*}%
\cp(n,m,k)=S(n,m,k)-S(n,m,k+1)\,.
\end{equation*}%
\end{theorem}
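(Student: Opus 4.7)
The plan is to prove the equivalent formulation $\cp(n,m,k)=S(n,m,k)-S(n,m,k+1)$, which together with the boundary $S(n,m,m+1)=0$ yields the theorem by telescoping.

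The case $k\leq m-n$ is transparent: since $m-k\geq n$, at most $n$ drivers can park regardless of the preferences, so every function has defect at least $m-n\geq k$; hence $\sum_{j\geq k}\cp(n,m,j)=n^m$, which matches the first case of \dref{def:s_sum}. Consistency of the two cases at the boundary $k=m-n$ reduces to Abel's binomial identity
\[
\sum_{i=0}^m\binom{m}{i}\,x\,(x+i)^{i-1}\,(y-i)^{m-i}=(x+y)^m
\]
applied with $x=n-m+k$ and $y=m-k$.

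For the substantive regime $k>m-n$, I pass to the generating function. Using $ve^{T(v)}=T(v)$, I first rewrite \lref{lem:genfunctdefective} in the more compact form
\[
A(u,v,t)=\frac{t-T(v)}{(1-ue^{T(v)})(t-ve^t)}.
\]
The second factor simplifies after the substitution $\tau:=t-T(v)$: since $t-ve^t=\tau-T(v)(e^\tau-1)$ (using $ve^T=T$), the quotient becomes $1/(1-T(v)\phi(\tau))$ with $\phi(w):=(e^w-1)/w$. Expanding $1/(1-T\phi(\tau))=\sum_{\ell\geq 0}T^\ell\phi(\tau)^\ell$, Taylor-expanding $\phi(w)^\ell$ (its coefficients are Stirling numbers of the second kind divided by factorial factors), and expanding $\tau^m=(t-T(v))^m$ by the binomial theorem isolates $[t^k]$ as an explicit sum over $\ell$. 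The $u$-extraction is $1/(1-ue^{T(v)})=\sum_r u^r e^{rT(v)}$, and the $v$-extraction uses the crucial reduction $T(v)^\ell e^{rT(v)}=v^\ell e^{(r+\ell)T(v)}$ (from $T=ve^T$) combined with the Lagrange-inversion identity $e^{\alpha T(v)}=\sum_n \alpha(\alpha+n)^{n-1}v^n/n!$. Assembling gives $\cp(n,m,k)$ as an explicit sum in $\ell$ with a Stirling-binomial coefficient times $(n-m+k+\ell)\,n^{m-k-\ell-1}$.

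The main obstacle is identifying this representation with the telescoping difference $S(n,m,k)-S(n,m,k+1)$ of Abel-type sums. My plan is to establish that both expressions satisfy the same Pascal-type recursion in $k$ — derivable from \lref{thm:recursion} after recasting it for cumulative sums — and that both agree at all sufficiently large $k$ (where each vanishes, since the defect cannot exceed $m$). Induction in $k$ then completes the identification. As a by-product, this furnishes the Abel-type identity for partial sums mentioned in the introduction.
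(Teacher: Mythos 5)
Your treatment of the trivial regime $k\le m-n$ is correct, and so is the compact rewriting $A(u,v,t)=\frac{t-T(v)}{(1-ue^{T(v)})(t-ve^t)}$ of \lref{lem:genfunctdefective}, but the proof breaks exactly at what you call the main obstacle, and that obstacle \emph{is} the theorem in the regime $k>m-n$. Your plan to bridge it---``both expressions satisfy the same Pascal-type recursion in $k$, derivable from \lref{thm:recursion} after recasting it for cumulative sums, and induction in $k$ from the vanishing values at large $k$''---does not go through as stated. \lref{thm:recursion} is not a recursion in $k$ at fixed $(n,m)$: in the coordinates $(r,s,k)=(n-m+k,\,m-k,\,k)$ it expresses $a(r,s,k)$ through $a(r,s-1,i)$ for $0\le i\le k+1$ and $a(r-1,s,0)$, i.e.\ it steps down in the number of occupied spaces and changes $n$ and $m$ simultaneously; and summing it over $k\ge K$ to ``recast it for cumulative sums'' does not telescope, because the weights $\binom{s+k}{k+1-i}$ depend on $k$. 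So an induction on $k$ anchored at $S(n,m,m+1)=0$ has no recursion to run on. The natural repair---induction on $s$ (equivalently $n$), for all $k$ simultaneously, after verifying that $D(n,m,k):=S(n,m,k)-S(n,m,k+1)$ satisfies the recursion of \lref{thm:recursion}---requires proving a nontrivial Abel-type binomial identity that you neither state nor prove, and which is not easier than the direct identification you set out to avoid; it would moreover make your entire Stirling-number coefficient extraction (whose final single-sum form you also do not verify; the expansion of $\bigl(T(v)\phi(\tau)\bigr)^\ell$ naturally produces a double sum) superfluous.

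For comparison, the paper closes this step in two ways, either of which you could adopt. Analytically, it expands the closed form of \lref{lem:genfunctdefective} using \eref{eq:a_null} for $\frac{e^{T(v)}}{1-ue^{T(v)}}$ and the series of $\frac{1}{1-\frac vte^t}$, so that the coefficient of $u^r\frac{v^st^k}{(s+k)!}$ emerges directly as $S(r+s,s+k,k)-S(r+s,s+k,k+1)$, with the partial Abel sums appearing as products of the two known series and no Stirling numbers at all. Combinatorially, it observes that an assignment has defect at least $k$ exactly when at least $\ell=n+k-m$ spaces stay empty, and conditions on the position $\ell+i$ of the $\ell$-th empty space: the $i$ cars parked to its left form a defect-free parking function counted by $\cp(\ell+i-1,i,0)$, they can be chosen in $\binom{m}{i}$ ways, and the remaining $m-i$ cars choose freely among the last $n-\ell-i$ spaces, giving $S(n,m,k)=\sum_{i}\binom{m}{i}\cp(\ell+i-1,i,0)(n-\ell-i)^{m-i}$ as the cumulative count. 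As it stands, your argument establishes the easy case and a correct reformulation of the generating function, but the decisive identification is missing.
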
%

\begin{proof}%
Expanding the explicit form of $A(u,v,t)$ in \lref{lem:genfunctdefective} with the help of \eref{eq:a_null} leads, after some lengthy manipulations, to
\begin{align*}
A(u,v,t)=&
\sum_{s\ge0}\sum_{k\ge0}\tfrac{s^{s+k}\,v^s\,t^k}{(s+k)!}
+\sum_{r\ge1}\sum_{s\ge0}\sum_{k\ge0}\sum_{i=0}^s
\tbinom{s+k}{i}\cdot r\cdot(r+i)^{i-1}\cdot(s-i)^{s+k-i}\cdot
u^r\tfrac{v^s t^k}{(s+k)!}\\
&-\sum_{r\ge0}\sum_{s\ge1}\sum_{k\ge0}\sum_{i=0}^{s-1}
\tbinom{s+k}{i}\cdot(r+1)\cdot(r+1+i)^{i-1}\cdot(s-1-i)^{s+k-i}\cdot
u^r\tfrac{v^s t^k}{(s+k)!}\\
=&\sum_{r,s,k\ge0}\big(S(r+s,s+k,k)-S(r+s,s+k,k+1)\big)\cdot
\,u^r\tfrac{v^s t^k}{(s+k)!}\,.
\end{align*}
From this, we read off directly that
\begin{equation*}%
a(r,s,k)=S(r+s,s+k,k)-S(r+s,s+k,k+1)\,.
\end{equation*}%
The statement of the theorem follows from the relation between $\cp(n,m,k)$ and $a(r,s,k)$.
\end{proof}%

Note that we can rewrite $S(n,m,k)$ for $k>m-n$ as
\begin{equation*}%
S(n,m,k)=\sum_{i=0}^{m-k}\binom{m}{i}\cp(n-m+k+i-1,i,0)\cdot(m-k-i)^{m-i}\;.
\end{equation*}%
This observation \cite{difrancesco} leads to a direct combinatorial proof of Theorem \ref{mainresult}.
\begin{proof}[Alternative proof]
If at least $k$ cars do not find a parking space, then there are at least $\ell=n+k-m$ empty parking spaces. Assuming the $\ell$-th empty space occurs at position $\ell+i$, there are $i$ cars successfully parked in the $\ell+i-1$ spaces to the left of it, which is counted by $\cp(\ell+i-1,i,0)$. Selecting these $i$ cars out of all $m$ cars can be done in $\binom{m}{i}$ different ways. The remaining $m-i$ cars are assigned to the $n-\ell-i$ rightmost spaces in $(n-\ell-i)^{m-i}$ different ways. Summing over all possible values of $i$ leads to
\begin{equation*}%
S(n,m,k)=\sum_{i=0}^{n-\ell}\binom{m}{i}\cdot\cp(\ell+i-1,i,0)\cdot(n-\ell-i)^{m-i}\;.
\end{equation*}%
\end{proof}%

Another way to derive this theorem is to reinterpret defective parking
functions in terms of $x$--parking functions as described in the
introduction and then apply the results from \cite{pitmanstanley98}.

\section{Abel's Binomial Identity}%
\label{sec:abel}%

In the last section we saw that the number of assignments of $m$ cars to $n$ spaces, such that at least $k$ drivers fail to park, is the sum $S(n,m,k)$. Interestingly, $S(n,m,k)$ turns out to be a partial Abel-type sum as it appears in Abel's Binomial identity \cite{abelbinom}.

\begin{lemma}[Abel's Binomial Identity]%
\begin{equation*}%
\sum_{i = 0}^{m} \binom{m}{i}\cdot a\cdot(a+i)^{i-1}\cdot(b-i)^{m-i} =(a+b)^m\quad \text{ for all } a,b\in \mathbb R, m\in\mathbb N_0 .
\end{equation*}%
\end{lemma}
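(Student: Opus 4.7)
The plan is to deduce Abel's identity from \tref{mainresult} specialized to $k=0$. Setting $k=0$ in \tref{mainresult} gives $S(n,m,0)=\sum_{j=0}^m\cp(n,m,j)=n^m$, since $S(n,m,0)$ counts all parking assignments. For $n>m$ the second case of \dref{def:s_sum} applies and yields
\[n^m=\sum_{i=0}^m\binom{m}{i}(n-m)(n-m+i)^{i-1}(m-i)^{m-i},\]
which is Abel's identity after the substitution $a=n-m$, $b=m$. Thus we obtain the identity whenever $a$ is a positive integer and $b=m$. Viewing both sides as polynomials in $a$ of degree at most $m$ and noting that they agree on infinitely many values of $a$, we extend to all $a\in\mathbb{R}$ while keeping $b=m$ fixed.

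To extend to arbitrary real $b$, I would induct on $m$. The case $m=0$ is immediate. For the inductive step, differentiate both sides of Abel's identity with respect to $b$. Using $\binom{m}{i}(m-i)=m\binom{m-1}{i}$, the $b$-derivative of the left-hand side equals $m$ times the Abel-type sum at parameter $m-1$, which by the induction hypothesis equals $m(a+b)^{m-1}$; this matches the derivative of the right-hand side. Hence the two sides of Abel at parameter $m$ agree up to an additive function of $a$ alone, and the previous paragraph shows this function vanishes at $b=m$, hence everywhere.

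The main obstacle is precisely this second step: the parking function picture directly produces only the diagonal $b=m$ of Abel's identity, so promoting a single slice to the full two-variable identity requires a non-combinatorial ingredient. The differentiation-with-induction trick is the cleanest route I see. Care is also needed with the convention that the $i=0$ summand $a(a+i)^{i-1}$ is to be read as the constant $1$ (contributing $b^m$), so that the identity makes sense as a polynomial identity in $a$ even at $a=0$.
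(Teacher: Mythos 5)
Your proof is correct, but it does something the paper itself does not attempt: the paper states this lemma as a classical result with a citation to Abel's 1826 paper and offers no proof, remarking only that the parking-function machinery reproves the special case $a,b,m\in\mathbb N_0$ with $b=m$ (exactly your first step, via $k=0$ in \tref{mainresult} and \dref{def:s_sum}). You upgrade that remark to a complete proof: first extending from positive integer $a$ to all real $a$ by observing both sides are polynomials in $a$ of degree at most $m$ agreeing at infinitely many points, then extending from the slice $b=m$ to all real $b$ by induction on $m$, using $\frac{\partial}{\partial b}$ together with $\binom{m}{i}(m-i)=m\binom{m-1}{i}$ to reduce the derivative of the degree-$m$ identity to the degree-$(m-1)$ identity. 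Both steps check out (there is no circularity, since the proof of \tref{mainresult} proceeds by generating functions and Lagrange inversion without invoking Abel's identity), and your care with the $i=0$ term, where $a(a+i)^{i-1}$ must be read as the constant $1$, is exactly the right convention to make the polynomial argument in $a$ legitimate. What your route buys is a self-contained combinatorial-plus-elementary proof of the full two-parameter identity from the parking-function count; what the paper's route buys is brevity, since it only needs the identity as an external tool to simplify $S(n,m,k)$.
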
%
In fact, our approach gives a proof of this identity for the case $a,b,m\in \mathbb N_0 $ and $b = m$. (Put $k=0$, $a=n-m$ and $b=m$ in the defining equation of $S(n,m,k)$ in \dref{def:s_sum}). We use this identity to find the following short expressions of $S(n,m,k)$ and $S(n,n,k)$.
\begin{align*}%
S(n,m,k)\,=\,&\begin{cases}
n^m & \text{if } k\leq m{-}n,\\
n^m-\displaystyle\sum_{i=0}^{k-1} \binom{m}{i}\cdot(-1)^i\cdot(n{-}m{+}k)\cdot(k{-}i)^{i}\cdot(n{+}k{-}i)^{m-1-i}&\text{otherwise},
\end{cases}\\
S(n,n,k)\,=\,&
n^n-\sum_{i=0}^{k-1} \binom{n}{i}\cdot(-1)^i\cdot k\cdot(k{-}i)^{i}\cdot(n{+}k{-}i)^{n-1-i}\,.
\end{align*}%

We now give $S(n,n,k)$ for values of $k$ close to zero or $n$:
\begin{align*}%
S(n,n,0)\,=\,&n^n\,, \\
S(n,n,1)\,=\,&n^n-(n+1)^{n-1}\,, \\
S(n,n,2)\,=\,&n^n-2\cdot(n+2)^{n-1}+2\cdot n\cdot(n+1)^{n-2}\,, \\
S(n,n,3)\,=\,&n^n - \tfrac{3}{2}\cdot n\cdot (n-1)\cdot(n+1)^{n-3} + 
    6\cdot n\cdot(n+2)^{n-2} - 3\cdot(n+3)^{n-1}\,, \\
\vdots~& \\
S(n,n,n-3)\,=\,& 3^n+n\cdot(n-3)\cdot 2^{n-1}+\tfrac{1}{2}\cdot n\cdot(n-1)^2\cdot(n-3)\,, \\
S(n,n,n-2)\,=\,& 2^n+n\cdot(n-2)\,, \\
S(n,n,n-1)\,=\,& 1\,, \\
S(n,n,n)\,=\,&
\begin{cases}%
1 & \text{if }n=0\,,\\
0 & \text{otherwise}\,.
\end{cases}%
\end{align*}%

It follows that for $m=n+\ell$ with $k>\ell$ and $\ell\in\mathbb Z$
\begin{equation*}%
\lim_{n\to\infty}n\cdot\left(\frac{S(n,m,k)}{n^m}-1\right)=-\,(k-\ell)\cdot\sum_{i=0}^{k-1}\frac{(-1)^i}{i!}\cdot(k-i)^i\cdot
e^{k-i}=: \phi(\ell,k)\,.
\end{equation*}%
(This limit is trivially zero if $k\leq\ell$.) This implies that for $\ell\leq0$ the limit 
\begin{equation*}%
\lim_{n\to\infty}\frac{\cp(n,n+\ell,k)}{\cp(n,n+\ell,0)} = \lim_{n\to\infty}\frac{\phi(\ell,k)-\phi(\ell,k+1)}{\phi(\ell,0)-\phi(\ell,1)}
\end{equation*}%
is finite (for $\ell>0$ the denominator is zero). For example, we find
\begin{equation*}%
\lim_{n\to\infty}\frac{\cp(n,n,1)}{\cp(n,n,0)}=2e-3
\qquad\mbox{and}\qquad
\lim_{n\to\infty}\frac{\cp(n,n,2)}{\cp(n,n,0)}=3e^2-8e+7/2\,.
\end{equation*}%

We conclude that in a random instance, we do not expect the number of drivers having to leave to be bounded by a constant.

Abel's binomial identity also gives us a first bound on $S(n,m,k)$. We obtain it by bounding the binomial coefficient appearing within the sum: Since for $k>m-n$, we know that
\begin{equation*}%
S(n,m,k)=\sum_{i=0}^{m-k}h(n,m,k)
\cdot\binom{m-k}{i}\cdot (n-m+k)\cdot(n-m+k+i)^{i-1}\cdot(m-k-i)^{m-k-i}
\end{equation*}%
with
\begin{equation*}%
h(n,m,k)=\frac{(m-k-i)^k\cdot(m-k-i)!\cdot m!}{(m-i)!\cdot(m-k)!}\leq \frac{m!}{(m-k)!}\,.
\end{equation*}%
It follows that
\begin{equation*}%
S(n,m,k)\le \frac{m!}{(m-k)!}\cdot n^{m-k}\,.
\end{equation*}%

We have seen how the formula for $S(n,m,k)$, derived by Abel's identity, allows us to obtain asymptotic results for fixed values of $k$. In the following section, we analyze the case in which~$k$  grows as a function of $n$. Some of the upcoming results may  be elementarily proven using the previous inequality.

\section{Asymptotics}%
\label{sec:asymptotics}%

For the parking problem with $m$ cars and $n$ spaces, the probability that at least $k$ drivers cannot park their cars is $S(n,m,k)/n^m$ with $S(n,m,k)$ as defined in \dref{def:s_sum}.  The case $k=0$ corresponds to partially filled hash tables and has been analyzed at length in~\cite{chassaing_louchard}, with the most interesting asymptotic behavior obtained for $n-m=O(\sqrt n)$. Similarly, we find non-trivial behavior in the regime where both $n-m$ and $k$ are of order $O(\sqrt n)$.

\begin{theorem}%
\label{thm:asym}%
Let $x\in\mathbb R^+$ and $y\in\mathbb R$. Then, the limiting probability that in a random assignment of $n+\lfloor y\sqrt n\rfloor$ drivers to $n$ spaces at least $\lfloor x\sqrt n\rfloor$ drivers fail to park is
\begin{equation*}%
\lim_{n\to\infty}\frac{S(n,n+\lfloor y\sqrt n\rfloor,\lfloor x\sqrt n\rfloor)}{n^{n+\lfloor y\sqrt n\rfloor}}
=\begin{cases}%
e^{-2x(x-y)} &\text{if } x>y, \\
1 &\text{otherwise.}
\end{cases}
\end{equation*}%
\end{theorem}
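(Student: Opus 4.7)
The case $x\leq y$ is immediate: once $n$ is large enough, $\lfloor x\sqrt n\rfloor \leq \lfloor y\sqrt n\rfloor = m-n$, and the first branch of \dref{def:s_sum} gives $S(n,m,k)=n^m$, so the ratio equals $1$. For the interesting case $x>y$, I would start from the representation $S(n,m,k) = \sum_{i=0}^{m-k}\binom{m}{i}\cp(n-m+k+i-1,i,0)(m-k-i)^{m-i}$ appearing after \tref{mainresult} and, after reindexing $j=m-k-i$, work with
\[
S(n,m,k) \;=\; \sum_{j=1}^{m-k}\binom{m}{k+j}\,\ell\,(n-j)^{m-k-j-1}\,j^{k+j},\qquad \ell := n-m+k.
\]
Setting $j = \lfloor\alpha n\rfloor$ for $\alpha\in(0,1)$ and applying Stirling's formula to $\binom{m}{k+j}$ together with Taylor expansions of $\log(1-j/n)$ and $\log(1+y/\sqrt n)$ through $O(1/\sqrt n)$, one must verify that the contributions of orders $n\log n$, $\sqrt n\log n$, and $\sqrt n$ in $\log T_j$ cancel against $m\log n=(n+y\sqrt n)\log n$. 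What remains should collapse to
\[
\frac{T_j}{n^m} \;=\; \frac{1}{n}\cdot\frac{x-y}{(1-\alpha)\sqrt{2\pi\alpha(1-\alpha)}}\,\exp\!\left(-\frac{(x-\alpha y)^2}{2\alpha(1-\alpha)}\right)\cdot(1+o(1))
\]
uniformly on compact subintervals of $(0,1)$, so that $\tfrac1n\sum_j\cdots$ is a Riemann sum converging to $\int_0^1 f(\alpha)\,d\alpha$ for the continuous integrand $f$ above, which vanishes rapidly at both endpoints.

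To evaluate this integral I would substitute $\beta=\sqrt{(1-\alpha)/\alpha}$ and use the algebraic identity
\[
\frac{(x-\alpha y)^2}{2\alpha(1-\alpha)} \;=\; \tfrac12\!\left(x\beta+\frac{x-y}{\beta}\right)^2 \;=\; x(x-y) + \frac{x^2\beta^2}{2} + \frac{(x-y)^2}{2\beta^2}.
\]
After simplifying the Jacobian, the integral becomes
\[
e^{-x(x-y)}\cdot\frac{2(x-y)}{\sqrt{2\pi}}\int_0^\infty \frac{1}{\beta^2}\exp\!\left(-\frac{x^2\beta^2}{2} - \frac{(x-y)^2}{2\beta^2}\right)d\beta,
\]
and the inversion $\gamma=1/\beta$ inside this last integral combined with the classical Gaussian identity $\int_0^\infty e^{-A\gamma^2-B/\gamma^2}d\gamma = \tfrac12\sqrt{\pi/A}\,e^{-2\sqrt{AB}}$ (applied with $A=(x-y)^2/2$ and $B=x^2/2$) produces a second factor of $e^{-x(x-y)}$, yielding the claimed limit $e^{-2x(x-y)}$.

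The main obstacle is the careful Stirling bookkeeping in the first step: several $\sqrt n$-level cancellations (one each for the $\log\alpha$ and $\log(1-\alpha)$ coefficients, plus one for the $\sqrt n\log n$ term) must be verified exactly, with no correction overlooked. A secondary issue is the Riemann-sum convergence near the endpoints $\alpha=0,1$, where the prefactor blows up algebraically before the exponential damping takes over; uniform control on these tails can be obtained using the elementary bound $S(n,m,k)\leq\frac{m!}{(m-k)!}n^{m-k}$ from Section~\ref{sec:abel}, applied with $k$ replaced by suitably larger values to bound the contributions with $j/n$ outside a fixed compact subinterval of $(0,1)$.
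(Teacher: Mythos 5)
Your proposal follows the paper's proof essentially step for step: the same Abel-term decomposition of $S(n,m,k)$ (your index $j=m-k-i$ is just the paper's index reversed, so your limiting integrand is the paper's $\frac{x-y}{\sqrt{2\pi\alpha^3(1-\alpha)}}\exp\bigl(-\tfrac{(x-(1-\alpha)y)^2}{2\alpha(1-\alpha)}\bigr)$ under $\alpha\mapsto1-\alpha$), the same termwise $\tfrac1n f(\alpha)$ asymptotics, the same Riemann-sum-to-integral step, and an equivalent closed-form evaluation: the paper substitutes $\alpha=\frac{u(x-y)}{x+u(x-y)}$, while your $\beta=\sqrt{(1-\alpha)/\alpha}$ followed by $\int_0^\infty e^{-A\gamma^2-B/\gamma^2}\,d\gamma=\tfrac12\sqrt{\pi/A}\,e^{-2\sqrt{AB}}$ is a correct minor variant arriving at the same $e^{-2x(x-y)}$; your algebraic identity and Jacobian bookkeeping check out. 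The one detail that would fail as stated is your tail control: the inequality $S(n,m,k)\le\frac{m!}{(m-k)!}n^{m-k}$ with a larger defect $k'$ bounds the probability of at least $k'$ failures, whereas the contributions with $j/n$ near $0$ or $1$ are partial sums, at the fixed threshold $k=\lfloor x\sqrt n\rfloor$, over the position of the $\ell$-th empty space, and assignments whose $\ell$-th empty space lies near either end of the car park need not have large defect, so these tails are not dominated by any $S(n,m,k')$. They are genuinely negligible, but one has to see this directly, e.g.\ by an integrable uniform bound $T_j/n^m\le C\,g(j/n)/n$ on the individual terms, or by a binomial large-deviation bound on the number of cars choosing among the last $j$ (respectively first $\ell+i$) spaces; this is exactly the point the paper compresses into its assertion of uniform convergence to a bounded limit on all of $[0,1]$.
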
%

\begin{proof}%
Let $p(n,m,k)=S(n,m,k)/n^m$. First note that the case $x\leq y$ corresponds to the case $m\geq n+k$, where $p(n,m,k)=1$. The case $x>y$ corresponds to the case $m<n+k$, where
\begin{equation*}%
p(n,m,k)=\sum_{i=0}^{m-k}p(n,m,k,i)
\end{equation*}%
with $p(n,m,k,i)$ given by
\begin{equation*}%
p(n,m,k,i)=\frac{1}{n^m}\cdot\binom{m}{i}\cdot(m-k-i)^{m-i}\cdot(n-m+k+i)^{i-1}\cdot(n-m+k)\,.
\end{equation*}%
A straightforward but slightly tedious calculation establishes that for $\alpha\in[0,1]$
\begin{equation*}%
\lim_{n\to\infty}n\,p(n,n+y\sqrt n,x\sqrt n,\alpha n)
=\frac{x-y}{\sqrt{2\pi\alpha^3(1-\alpha)}}\cdot\exp\left(-\frac{(x-(1-\alpha)y)^2}{2\alpha(1-\alpha)}\right)\,.
\end{equation*}%

As we have uniform convergence to a bounded limiting function for $\alpha\in[0,1]$, it is permissible to approximate $p(n,n+\lfloor y\sqrt n\rfloor,\lfloor x\sqrt n\rfloor)$ for large $n$ by an integral as follows:
\begin{align*}%
\lim_{n\to\infty}p(n,n+\lfloor y\sqrt n\rfloor,\lfloor x\sqrt n\rfloor)
&=\lim_{n\to\infty}
\sum_{i=0}^{n-\lfloor x\sqrt n\rfloor}p(n,n+\lfloor y\sqrt n\rfloor,\lfloor x\sqrt n\rfloor,i)\\
&=\lim_{n\to\infty}\int_0^{1-x/\sqrt n}n\cdot p(n,n+y\sqrt n,x\sqrt n,\alpha n)\,d\alpha\\
&=\int_0^1\frac{x-y}{\sqrt{2\pi\alpha^3(1-\alpha)}}\cdot\exp\left(-\frac{(x-(1-\alpha)y)^2}{2\alpha(1-\alpha)}\right)d\alpha\,.
\end{align*}%

Under the substitution $\alpha=\frac{u(x-y)}{x+u(x-y)}$, this integral simplifies to
\begin{equation*}%
\frac1{\sqrt{2\pi}}\cdot\int_0^\infty
\sqrt{\frac{x(x-y)}{u^3}}
\cdot\exp\left(-\,x\cdot(x-y)\cdot\frac{(1+u)^2}{2u}\right)\,du
=\exp(-2x(x-y))\,.
\end{equation*}%
\end{proof}%

This theorem implies that for $m<n+k$, a good approximation is given by
\begin{equation*}%
\frac{\cp(n,m,k)}{n^m}\approx\frac{2}{n}\cdot(2k-m+n)
\cdot e^{-2k(k-m+n)/n}\,.
\end{equation*}%
\fref{fig:distribution1} shows a comparison between $\cp(n,m,k)/n^m$ and this approximation for the three qualitatively different scenarios $m<n$, $m=n$ and $m>n$.

\begin{figure}%
\begin{center}%
\includegraphics[width=0.32\textwidth,angle=0]{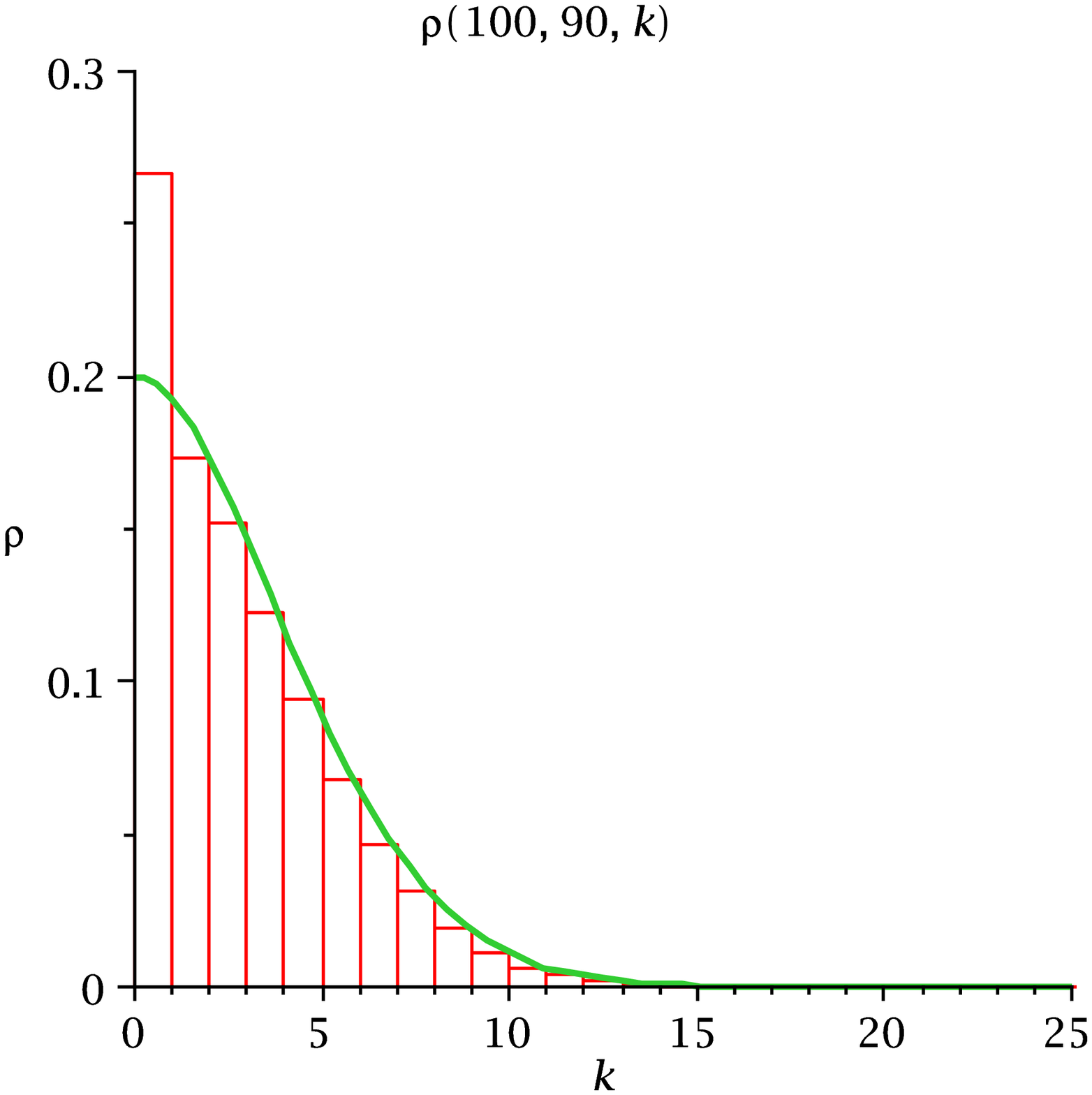}%
\includegraphics[width=0.32\textwidth,angle=0]{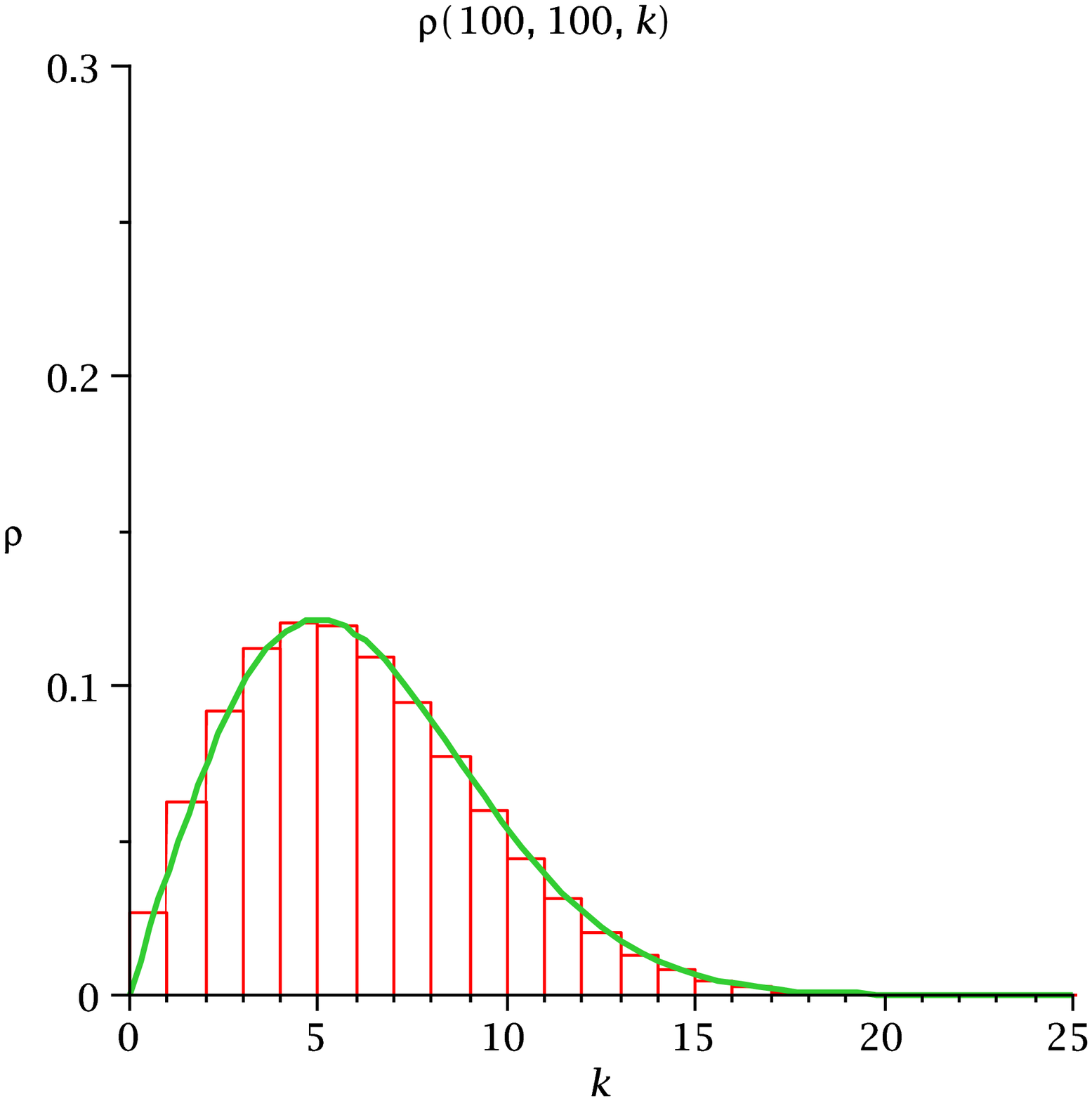}%
\includegraphics[width=0.32\textwidth,angle=0]{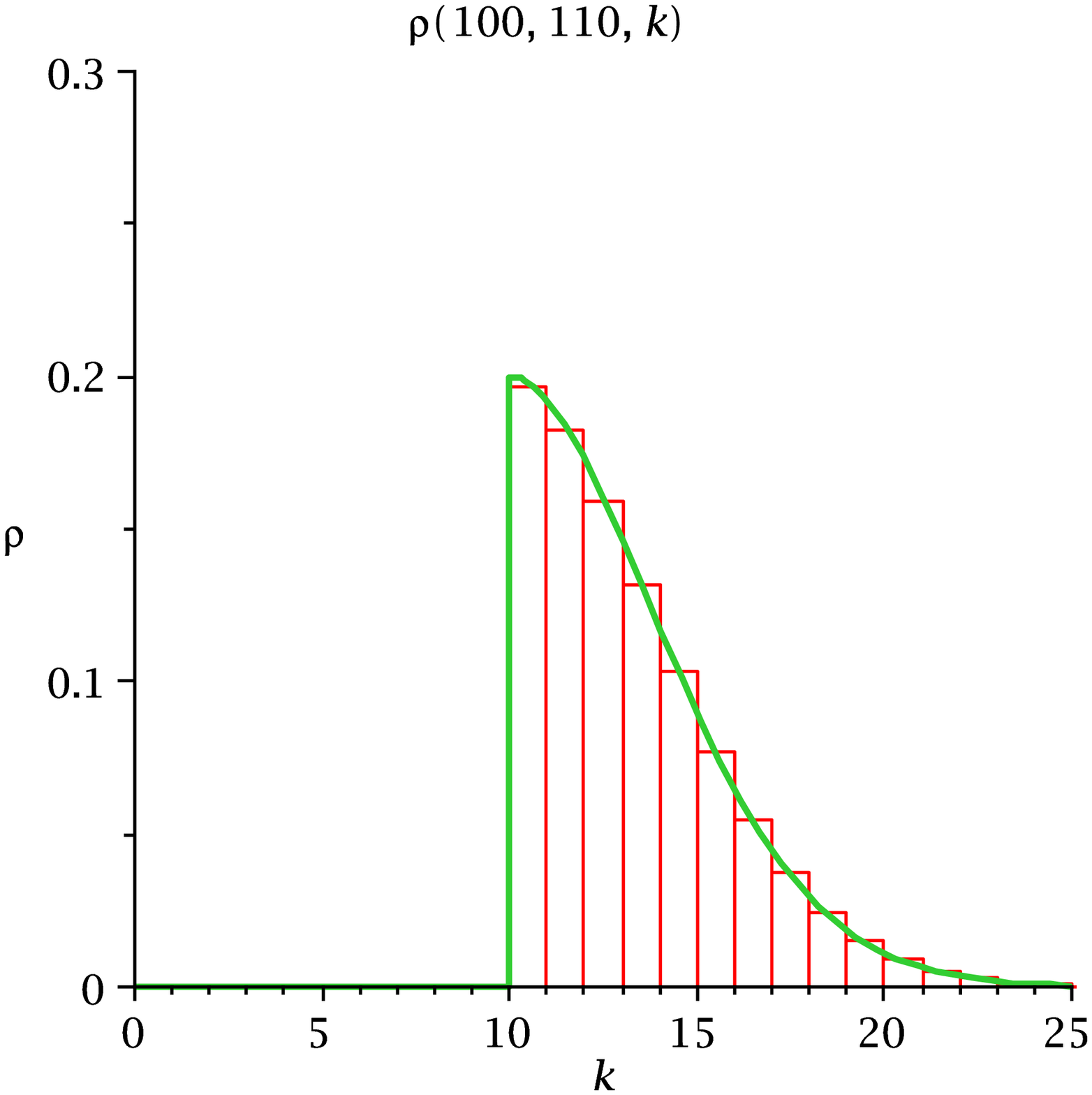}%
\end{center}%
\caption{\label{fig:distribution1}
This figure shows a comparison between the probabilities $\rho(n,m,k)=\cp(n,m,k)/n^m$ that $m$ cars park randomly on $n$ spaces, such that $k$ drivers fail to park, and their asymptotic approximation for $n=100$ and $m=90$ (left), $m=100$ (middle) and $m=110$ (right), respectively.}
\end{figure}%

\tref{thm:asym} also shows that in the special case $m=n$, where the number of drivers and the number of parking spaces coincide, a random assignment will result in $\sqrt n$ drivers leaving. In particular we get the following corollary:
 
\begin{corollary}%
\label{cor:root}%
Let $k\colon\mathbb N\to\mathbb N$. Then, the limiting probability that in a random assignment of $n$ drivers to $n$ spaces at least $k(n)$ drivers fail to park is
\begin{equation*}%
\lim_{n \rightarrow \infty }\frac{S(n,n,k(n))}{n^n}=
\begin{cases}%
0 & \text{if } \lim_{n\to\infty} k(n)/\sqrt n=\infty, \\
1 & \text{if } \lim_{n\to\infty}k(n)/\sqrt n=0.
\end{cases}%
\end{equation*}%
\end{corollary}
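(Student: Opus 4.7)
The plan is to deduce this corollary directly from \tref{thm:asym} by a sandwiching argument, using the key monotonicity observation that $S(n,n,k)$ is non-increasing in $k$ (since $S(n,n,k)=\sum_{j\ge k}\cp(n,n,j)$ counts assignments in which \emph{at least} $k$ drivers fail to park, and increasing the threshold only removes terms from the sum). Specialising \tref{thm:asym} to $y=0$, we have, for every fixed $x>0$,
\begin{equation*}
\lim_{n\to\infty}\frac{S(n,n,\lfloor x\sqrt n\rfloor)}{n^n}=e^{-2x^2}.
\end{equation*}

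For the first case, assume $k(n)/\sqrt n\to\infty$. Fix an arbitrary $x>0$; then for all sufficiently large $n$ we have $k(n)\ge\lfloor x\sqrt n\rfloor$, so by monotonicity $S(n,n,k(n))\le S(n,n,\lfloor x\sqrt n\rfloor)$. Dividing by $n^n$ and letting $n\to\infty$ yields
\begin{equation*}
\limsup_{n\to\infty}\frac{S(n,n,k(n))}{n^n}\le e^{-2x^2}.
\end{equation*}
Since $x>0$ was arbitrary, sending $x\to\infty$ forces the limit to be $0$.

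For the second case, assume $k(n)/\sqrt n\to 0$. Fix an arbitrary $x>0$; then for all sufficiently large $n$ we have $k(n)\le\lfloor x\sqrt n\rfloor$, and monotonicity gives $S(n,n,k(n))\ge S(n,n,\lfloor x\sqrt n\rfloor)$, hence
\begin{equation*}
\liminf_{n\to\infty}\frac{S(n,n,k(n))}{n^n}\ge e^{-2x^2}.
\end{equation*}
Since $x>0$ was arbitrary, sending $x\to 0^+$ gives the lower bound $1$; combined with the trivial upper bound $S(n,n,k(n))/n^n\le 1$, the limit equals $1$.

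I do not anticipate any real obstacle here: the entire content lies in \tref{thm:asym}, and the remaining work is just a standard monotone sandwich. The one step that needs to be stated (but not proved) explicitly is the monotonicity of $S(n,n,\cdot)$ in its third argument, which is immediate from its combinatorial interpretation in \tref{mainresult}.
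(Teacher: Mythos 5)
Your proposal is correct and matches the paper's intended route: the paper states \cref{cor:root} as an immediate consequence of \tref{thm:asym} (with $y=0$) without a separate proof, and your monotone sandwich argument, using that $S(n,n,k)$ is non-increasing in $k$ together with letting $x\to\infty$ respectively $x\to0^+$, is exactly the standard deduction being invoked.
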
%

Another question one may wish to ask is how the number $m$ of cars needs to scale with the number $n$ of parking spaces to fill the car park with a finite limiting probability, and when this probability reaches one. Similarly, from the viewpoint of the coupon collector, it is reasonable to ask how many coupons are needed to obtain a complete set.

Recall that the quantity $S(n,m,m-n+1)$ counts the number of car parking assignments of $m$ cars on $n$ spaces, such that at least $m-n+1$ cars do not find a parking space, or, equivalently, such that at most $n-1$ cars do find a parking space. Therefore, the probability that the car park is full is given by
\begin{equation*}%
\frac{\cp(n,m,m-n)}{n^m}=1-\frac{S(n,m,m-n+1)}{n^m}\,.
\end{equation*}%
We find non-trivial behavior when $m$ depends linearly on $n$ as stated in the following theorem, a similar version of which can be found in \cite{SpencerYan}.

\begin{theorem}%
Let $\lambda\in\mathbb R^+$.
Then, the limiting probability that in a random assignment of $\lfloor \lambda n\rfloor$ drivers to $n$ spaces
all spaces are occupied is
\begin{equation*}%
\lim_{n\to\infty}\frac{\cp(n,\lfloor\lambda n\rfloor,\lfloor\lambda n\rfloor-n)}{n^{\lfloor\lambda n\rfloor}}=
\begin{cases}%
0&\text{if }\lambda\leq1,\\
1-\frac{1}{\lambda}\cdot T\left(\lambda e^{-\lambda}\right)&\text{if }\lambda>1.
\end{cases}
\end{equation*}%
\end{theorem}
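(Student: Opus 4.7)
The plan is to start from
\[
\frac{\cp(n,m,m-n)}{n^m} = 1 - \frac{S(n,m,m-n+1)}{n^m}
\]
and insert the closed form from \dref{def:s_sum}. With $k = m - n + 1$ the parameters there specialize to $n - m + k = 1$ and $m - k = n - 1$, so
\[
\frac{S(n,m,m-n+1)}{n^m} = \sum_{i=0}^{n-1} f_n(i), \qquad f_n(i) := \frac{\binom{m}{i}\,(i+1)^{i-1}\,(n-1-i)^{m-i}}{n^m}.
\]
For $\lambda < 1$, the defect $m - n$ is negative for all large $n$, so $\cp(n,m,m-n) = 0$ by the sign convention. For $\lambda = 1$, Pollak's formula $\cp(n,n,0) = (n+1)^{n-1}$ recalled in the introduction gives $(n+1)^{n-1}/n^n \to 0$. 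Both subcases of the first branch are thus handled immediately.

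For the main case $\lambda > 1$, I would first compute the pointwise limit of each summand: for each fixed $i \geq 0$,
\[
f_n(i) \;\longrightarrow\; \frac{\lambda^i (i+1)^{i-1}}{i!}\,e^{-\lambda(i+1)},
\]
using $\binom{m}{i}/n^i \to \lambda^i/i!$ and $(1-(i+1)/n)^{m-i} \to e^{-\lambda(i+1)}$. To pass to the infinite sum, Stirling's formula is used to show that $\tfrac{1}{n}\log f_n(\lfloor \alpha n\rfloor)$ converges, uniformly on compact subsets of $[0,1)$, to
\[
g(\alpha) := \lambda \log\lambda - (\lambda-\alpha)\log\frac{\lambda-\alpha}{1-\alpha}.
\]
A short calculation gives $g(0) = 0$ and $g'(\alpha) = \log\bigl(1 + (\lambda-1)/(1-\alpha)\bigr) - (\lambda-1)/(1-\alpha)$, which is strictly negative on $[0,1)$ whenever $\lambda > 1$ (by $\log(1+v) < v$ for $v > 0$). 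Hence $g$ is strictly decreasing on $(0,1)$ with $g(\alpha) \leq \tfrac{1}{2}\,g'(0)\,\alpha$ near zero, where $g'(0) = \log\lambda - (\lambda-1) < 0$. Splitting the sum at $i = n^{1/3}$, the tail $i > n^{1/3}$ contributes at most $O(n \cdot e^{-c\,n^{1/3}}) \to 0$, while on $i \leq n^{1/3}$ the error $(m-i)(i+1)^2/n^2 = O(i^2/n)$ is uniformly $o(1)$, giving $f_n(i) = (1+o(1))\,\lambda^i(i+1)^{i-1}e^{-\lambda(i+1)}/i!$ uniformly in that range. The partial sum over $i \leq n^{1/3}$ therefore converges to the full series $\sum_{i\geq 0} \lambda^i(i+1)^{i-1}e^{-\lambda(i+1)}/i!$.

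Finally, the substitution $j = i + 1$ together with the identity $j^{j-2}/(j-1)! = j^{j-1}/j!$ identifies
\[
\sum_{i \geq 0} \frac{\lambda^i(i+1)^{i-1}}{i!}\,e^{-\lambda(i+1)}
= \frac{1}{\lambda}\sum_{j \geq 1} \frac{j^{j-1}}{j!}\,(\lambda e^{-\lambda})^j
= \frac{T(\lambda e^{-\lambda})}{\lambda},
\]
by the series for the tree function from Section~\ref{sec:genfun}. Combining, $\cp(n,m,m-n)/n^m \to 1 - T(\lambda e^{-\lambda})/\lambda$, as claimed.

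The main obstacle is the tail control in the intermediate regime $i \sim \sqrt n$: naive bounds like $(1-(i+1)/n)^{m-i} \leq \exp(-(i+1)(m-i)/n)$ leak an $i^2/n$ correction that the limiting series $\sum \lambda^i(i+1)^{i-1}e^{-\lambda(i+1)}/i!$ cannot absorb as a dominating function, so straightforward dominated convergence fails. It is precisely the Stirling-based large-deviation analysis together with the strict negativity of $g'(0)$ for $\lambda \neq 1$ that makes the tail decay fast enough. For the same reason, the boundary case $\lambda = 1$ genuinely has to be treated separately, since there $g \equiv 0$ and no exponential decay is available; luckily Pollak's formula disposes of that case in one line.
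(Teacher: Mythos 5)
Your proposal is correct and follows the same skeleton as the paper's proof: write $\cp(n,m,m-n)/n^m=1-S(n,m,m-n+1)/n^m$, expand $S$ via \dref{def:s_sum} into the sum $\sum_{i=0}^{n-1}\binom{m}{i}(i+1)^{i-1}(n-1-i)^{m-i}/n^m$, take the term-wise limit $\lambda^i(i+1)^{i-1}e^{-\lambda(i+1)}/i!$, and identify the resulting series with $T(\lambda e^{-\lambda})/\lambda$ via the shift $j=i+1$. The difference lies in how the interchange of limit and summation is justified, and here your route is genuinely distinct and in fact more careful. The paper invokes dominated convergence with the claimed bound
\begin{equation*}
\frac{1}{n^m}\binom{m}{i}(i+1)^{i-1}(n-1-i)_+^{m-i}\leq\frac{(m/n)^i}{i!}(i+1)^{i-1}e^{-(1+i)m/n}\,,
\end{equation*}
but, exactly as you observe, the elementary estimates $\binom{m}{i}\le m^i/i!$ and $(1-(i+1)/n)^{m-i}\le e^{-(i+1)(m-i)/n}$ leave an uncontrolled factor $e^{i(i+1)/n}$, and a Stirling computation (e.g.\ at $m=n$, $i=n/2$, where the term exceeds the claimed bound by a factor tending to $\sqrt{2}$, or at $i=n-2$, where the ratio grows like $\sqrt{n}$) shows the inequality actually fails for $i$ of order $n$; so your replacement of this step by the split at $i=n^{1/3}$, with uniform asymptotics on the head and the large-deviation rate function $g(\alpha)=\lambda\log\lambda-(\lambda-\alpha)\log\frac{\lambda-\alpha}{1-\alpha}$ (whose derivative you compute correctly, with $g'<0$ on $[0,1)$ precisely when $\lambda>1$) controlling the tail, is not mere pedantry but repairs a real gap. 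The price you pay is that your argument is tied to $\lambda>1$, so you must dispose of $\lambda\le1$ separately, which you do cleanly ($\cp=0$ for negative defect when $\lambda<1$; Pollak's formula when $\lambda=1$); the paper's approach, by contrast, buys a single computation valid for all $\lambda>0$, with the case $\lambda\le1$ falling out of the identity $T(\lambda e^{-\lambda})=\lambda$. Only minor polishing would be needed in a full write-up: the tail bound for $\alpha$ near $1$ is not literally covered by uniform convergence on compact subsets of $[0,1)$ and needs the crude observation that $(n-1-i)^{m-i}$ makes those terms superexponentially small, and the intermediate range $n^{1/3}\le i\le\sqrt{n}$ should be bridged by noting the $e^{O(i^2/n)}$ correction is merely bounded there.
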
%

\begin{proof}
Let $p(n,m)=S(n,m,m-n+1)/n^m$. We have
\begin{equation*}%
p(n,m)= \frac{1}{n^m}\cdot\sum_{i=0}^{n-1}\binom{m}{i}\cdot(i+1)^{i-1}\cdot(n-1-i)^{m-i}\,.
\end{equation*}%
Choosing $m=\lfloor\lambda n\rfloor$, we exchange summation with taking the limit and compute the limit term-wise, arriving at
\begin{equation}%
\label{eq:sumasy}
\lim_{n\to\infty}p(n,\lfloor\lambda n\rfloor)=
\sum_{i=0}^\infty\frac{\lambda^i}{i!}\cdot(i+1)^{i-1}\cdot e^{-\lambda(1+i)}\,.
\end{equation}%
Exchanging summation with taking the limit is justified by the Lebesgue dominated convergence theorem, as each term is bounded by the limiting expression, that is,
\begin{equation*}%
0\leq
\frac{1}{n^m}\cdot\binom{m}{i}\cdot(i+1)^{i-1}\cdot(n-1-i)_+^{m-i}\leq\frac{(m/n)^i}{i!}\cdot(i+1)^{i-1}\cdot
e^{-(1+i)m/n}\,.
\end{equation*}%

The sum in \eref{eq:sumasy} converges for $|\lambda e^{-\lambda}|\leq1$ (in particular for all positive $\lambda$). It can be expressed using the tree function $T(v)$ with $v=\lambda e^{-\lambda}$, and we find
\begin{equation*}%
\lim_{n\to\infty}p(n,\lfloor\lambda n\rfloor)=\frac1\lambda\cdot T\left(\lambda e^{-\lambda}\right)\,.
\end{equation*}%

Recalling that $t=T(v)$ solves $v=te^{-t}$, we find that determining $t=T\left(\lambda e^{-\lambda}\right)$ reduces to solving $te^{-t}=\lambda e^{-\lambda}$. For $\lambda\leq1$, we find $t=\lambda$, however, for $\lambda>1$ no such simplification is possible.
\end{proof}%

\begin{figure}%
\begin{center}%
\includegraphics[width=0.32\textwidth,angle=0]{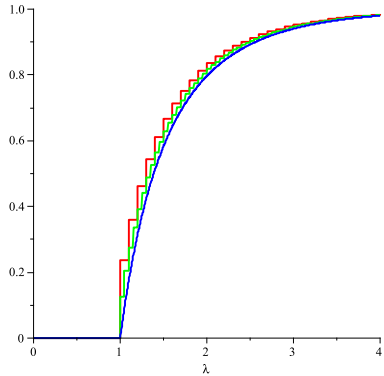}%
\end{center}%
\caption{\label{fig:distribution2}
This chart shows a comparison of the probabilities $\cp(n,m,m-n)/n^m$ for $n=10$, $n=20$, and the limiting curve given by $1-T(\lambda e^{-\lambda})/\lambda$ from top to bottom, respectively. Here, $m=\lfloor\lambda n\rfloor$.}
\end{figure}%

\fref{fig:distribution2} shows a comparison between $\cp(n,m,m-n)/n^m$ with $m=\lfloor\lambda n\rfloor$ and finite values of $n$ and the limiting curve given by $1-T(\lambda e^{-\lambda})/\lambda$.

We conclude with a corollary of the previous theorem which emphasizes the threshold character of its statement. In this sense, the relation between the following corollary and the previous theorem reflects the relation between \cref{cor:root} and \tref{thm:asym}.

Since $\frac{1}{e^{\lambda-1}-\lambda}$ is an upper bound for $T(\lambda e^{-\lambda})/\lambda$ which converges to zero for $\lambda\to\infty$, almost all random assignments of $m=m(n)$ drivers to $n$ parking spaces will result in all spaces being occupied if $m/n$ tends to infinity.

\begin{corollary}%
Let $m\colon\mathbb N\to\mathbb N$. Then, the limiting probability that in a random assignment of $m(n)$ drivers to $n$ spaces all spaces are occupied is
\begin{equation*}%
\lim_{n\to\infty}\frac{\cp(n,m(n),m(n)-n)}{n^{m(n)}}=
\begin{cases}%
0 & \text{if } \lim_{n\to\infty} m(n)/n\le 1,\\
1 & \text{if } \lim_{n\to\infty} m(n)/n=\infty.
\end{cases}%
\end{equation*}%
\end{corollary}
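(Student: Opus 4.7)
The plan is to combine the preceding theorem with a simple monotonicity property and then sandwich the probability for the general sequence $m(n)$ between the probabilities for two well-chosen linear sequences $\lfloor\lambda n\rfloor$, to which the theorem directly applies.

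The first step is to establish monotonicity of $\cp(n,m,m-n)/n^m$ in $m$ (with $n$ fixed). Given any sequence of preferences $(f(1),\ldots,f(m))$ that fills the park, the extended sequence $(f(1),\ldots,f(m),j)$ also fills the park for every $j\in\{1,\ldots,n\}$, since the already-parked cars remain in place and the new driver simply finds no free space and leaves. Hence $\cp(n,m{+}1,m{+}1{-}n)\ge n\cdot\cp(n,m,m-n)$, and dividing by $n^{m+1}$ yields $\cp(n,m{+}1,m{+}1{-}n)/n^{m+1}\ge\cp(n,m,m-n)/n^m$.

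For the first case, assume $\limsup_n m(n)/n\le 1$. The preceding theorem gives, for any fixed $\lambda>1$, the limit $1-T(\lambda e^{-\lambda})/\lambda$. Because $T$ is analytic and $T(e^{-1})=1$ (as $1=e^{-1}\cdot e^{1}$), we have $T(\lambda e^{-\lambda})/\lambda\to 1$ as $\lambda\to 1^+$, so the limit can be made arbitrarily small. Given $\delta>0$, fix $\lambda_0>1$ with $1-T(\lambda_0 e^{-\lambda_0})/\lambda_0<\delta$; eventually $m(n)\le\lfloor\lambda_0 n\rfloor$, and monotonicity together with the theorem yields $\cp(n,m(n),m(n){-}n)/n^{m(n)}<2\delta$ for large $n$. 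For the second case, assume $m(n)/n\to\infty$. Using the bound $T(\lambda e^{-\lambda})/\lambda\le 1/(e^{\lambda-1}-\lambda)$ noted just before the corollary, pick $\lambda_1$ with $T(\lambda_1 e^{-\lambda_1})/\lambda_1<\delta/2$; eventually $m(n)\ge\lfloor\lambda_1 n\rfloor$, so by monotonicity the target probability exceeds $\cp(n,\lfloor\lambda_1 n\rfloor,\lfloor\lambda_1 n\rfloor-n)/n^{\lfloor\lambda_1 n\rfloor}$, which tends to $1-T(\lambda_1 e^{-\lambda_1})/\lambda_1>1-\delta/2$.

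The main obstacle is really only the monotonicity claim, which must be phrased carefully: the point is that in the original $m$-car scenario the park is already full, so the $(m{+}1)$-st driver cannot change any placements and the extension preserves fullness regardless of the added preference. Once this is in hand, the two cases reduce to continuity of $\lambda\mapsto T(\lambda e^{-\lambda})/\lambda$ at $\lambda=1$ and its decay to $0$ as $\lambda\to\infty$, both of which are immediate from the defining equation $t=\lambda e^{t-\lambda}$.
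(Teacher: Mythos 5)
Your proof is correct and follows essentially the route the paper intends: the corollary is deduced from the preceding theorem by comparing $m(n)$ with linear sequences $\lfloor\lambda n\rfloor$, using the decay bound $T(\lambda e^{-\lambda})/\lambda\le 1/(e^{\lambda-1}-\lambda)$ for the upper case and the behaviour near $\lambda=1$ for the lower case; your explicit monotonicity lemma (appending a driver to a full park keeps it full) is exactly the detail the paper leaves implicit. One cosmetic point: $T$ is not analytic at $v=e^{-1}$ (it has a square-root branch point there), but only left-continuity of $T$ at $e^{-1}$ is needed, which indeed follows from the defining equation $te^{-t}=\lambda e^{-\lambda}$ as you note.
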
%

\section{Conclusion}%
\label{sec:conclusion}%

We have derived the generating function for the defective car parking numbers $\cp(n,m,k)$ and have used it to solve the problem of their exact and asymptotic enumeration. They are closely  connected to the $x$--parking functions for which numerous structural interpretations exist, and these interpretations therefore transfer.  However, we expect applications of our results in different areas, since parking functions, with or without defect, naturally capture various time-dependent models.

For example, motivated by hashing with linear probing, a drop-push model for percolation was proposed in \cite{majumdar_dean}. Here, particles are dropped sequentially on a substrate, followed by the transport of the dropped particles via a pushing mechanism, caused by a local hard-core repulsion between particles on the substrate. If the transport is unidirectional, this is identical to the parking problem studied in this paper.

Another example, the Abelian sandpile model, allows a decrease in the quantity of the system that corresponds to the number of cars. Such a decrease is not only possible but even necessary to prove certain stability properties of the system. Translated to parking functions, the notion of a defect naturally appears~\cite{dhar00}.

A further potential field of application is queueing theory. For instance, the  branching process described in  \cite{DumitriuSpencerYan03} can by viewed in this context. With respect to queueing, parking spaces are interpreted as time slots, at each of which exactly one task (represented by one car) can be processed. Whereas in a branching process, at least one unprocessed task must exist at every point in time; there is no reason to forbid idle time in a more general queueing process.

Finally, we would like to point out that Alois Panholzer has independently obtained related results  \cite{panholzer}, notably a detailed description of limiting distributions.

\section{Acknowledgment}%

 This paper was written during the authors' visit to the programme on Combinatorics and Statistical Mechanics at the Isaac Newton Institute in Cambridge. We are grateful to the Institute for its inspiring atmosphere and hospitality. We would also like to express our gratitude to Philippe Di Francesco for pointing out to us the observation that led to the alternative proof to \tref{mainresult}.

\bibliographystyle{abbrv}%

\end{document}